\newcommand{\vep}{\varepsilon}
\newcommand{\Scal}{\mathcal{S}}
\newcommand{\Pcal}{\mathcal{P}}
\newcommand{\Acal}{\mathcal{A}}
\newcommand{\Ical}{\mathcal{I}}
\newcommand{\Usf}{\mathsf{U}}
\newcommand{\Vsf}{\mathsf{V}}
\newcommand{\Asf}{\mathsf{A}}
\newcommand{\Ysf}{\mathsf{Y}}
\newcommand{\Qsf}{\mathsf{Q}}
\newcommand{\Bsf}{\mathsf{B}}
\newcommand{\wt}[1]{\widetilde{#1}}
\crefname{hypothesis}{Hypothesis}{Hypotheses}
\title{Schwarz iteration method for elliptic equation with rough media based on random sampling
	\thanks{Submission DATE. \funding{The work is supported in part by the National Science Foundation via grant 1740707. The work of SW is further supported in part by National Science Foundation grants 1447449, 1628384, and 1634597; Subcontract 8F-30039 from Argonne National Laboratory; and Award N660011824020 from the DARPA Lagrange Program. The work of KC and QL is further supported in part by Wisconsin Data Science Initiative and National Science Foundation via grant DMS-1750488, and DMS-1107291: RNMS KI-Net.}}}
\author{Ke Chen\thanks{Department of Mathematics, University of Texas at Austin, Austin, TX, 78731
		(\email{kechen@math.utexas.edu}, \url{https://web.ma.utexas.edu/users/kechen/}).}
	\and Qin Li\thanks{Department of Mathematics, University of Wisconsin-Madison, Madison, WI, 53706
		(\email{qinli@math.wisc.edu}, \url{http://www.math.wisc.edu/\textasciitilde qinli/}).}
	\and Stephen J. Wright\thanks{Department of Computer Science, University of Wisconsin-Madison, Madison, WI, 53706
		(\email{swright@cs.wisc.edu}, \url{http://pages.cs.wisc.edu/\textasciitilde swright/}).}
}
\begin{document}

\maketitle

\begin{abstract}
We propose a computationally efficient Schwarz method for elliptic
equations with rough media. A random sampling strategy is used to find
low-rank approximations of all local solution maps in the offline
stage; these maps are used to perform fast Schwarz iterations in the
online stage. Numerical examples demonstrate accuracy and robustness
of our approach.
\end{abstract}

\begin{keywords}
  Randomized sampling, elliptic equation, finite element method, Schwarz iteration
\end{keywords}

\begin{AMS}
  65N30, 65N55
\end{AMS}

\section{Introduction}

Many problems of fundamental importance in science and engineering
have structures that span several spatial scales. To give two
examples, airplane wings constructed from fiber-reinforced composite
materials and the permeability of groundwater flows modeled by porous
media can both be described by partial differential equations (PDEs)
whose coefficients have multiscale structure. Direct numerical
simulation for these problems is difficult because the discretized
system has very many degrees of freedom. Domain decomposition and
parallel computing are usually needed to solve the discretized system.

Different types of PDEs typically require different strategies to
overcome the computational difficulties arising from multiple
scales. Computations with elliptic PDEs on rough media are termed
``numerical homogenization,'' for which several approaches have been
proposed. The most well-known algorithms include the Generalized
Finite Element Method (GFEM)\cite{Osborn83,BL11}, Heterogeneous
Multiscale Methods (HMM)
\cite{abdulle_weinan_engquist_vanden-eijnden_2012,e2003}, Multiscale
Finite Element Methods (MsFEM)
\cite{efendiev2009multiscale,HOU1997169}, local orthogonal
decomposition \cite{maalqvist2014localization}, and local basis
construction \cite{Owhadi14,Owhadi15}. Most of these methods divide
the computation into offline and online stages. The offline step finds
local bases that are adaptive to local properties and capture
small-scale effects on the macroscopic solutions. In the online step,
a global stiffness matrix is assembled in such a way that this
small-scale information is implanted and preserved. The online
computation is performed on a coarse grid, thus reducing computational
costs.  Alternative approaches use a domain decomposition framdwork to
target directly the problem on the fine mesh (see, for example,
\cite{graham2007domain,mathew2008domain,toselli2006domain,dolean2015introduction,smith2004domain}
and references therein). These methods are typically iterative,
dividing the domain into patches to allow for parallel
computation. The most important issues for these approaches are (1)
the local solvers need to resolve the fine grids, which drives up
computational costs; and (2) the convergence rate depends on the
conditioning, so many iterations are required for an ill-conditioned
problem. Several strategies have been proposed to overcome these
issues. One such strategy is to use MsFEM as a preconditioner for the
Schwarz method \cite{Aarnes2002MultiscaleDD} or to construct coarse
spaces via solution of local eigenvalue problems
\cite{Efendiev2010,EFENDIEV2011}. This preconditioner differs from the
traditional domain decomposition preconditioner in that the coarse
solver is adaptive to the small scale features. In contrast to the
deterioration of traditional preconditioner when multiscale structure
is present, its adaptive counterpart is nearly independent of high
contrast and of small scales within the media
\cite{Aarnes2002MultiscaleDD,GALVIS2014456}.

We propose a rather different perspective for numerical
homogenization, under the framework of domain decomposition with
Schwarz iteration. The homogenization phenomenon for elliptic
equations with highly oscillatory media refers to the fact that its
solution can be approximated by the solution to an effective equation
that has no oscillation in the media \cite{Allaire92}.  Although it is
not always easy to identify the effective equation explicitly, just
the knowledge that
it exists allows us to argue that the equation can be ``compressed''
in some sense.  One still needs to understand what aspect, exactly,
can be ``compressed.'' In our previous work \cite{chen2018random}, we
demonstrated that the collection of Green's functions, when
discretized and stored in a matrix form, can be compressed
approximately into a low-dimensional space spanned by its leading
singular vectors. To obtain these representative basis functions, we
apply a random sampling technique analogous to the one used in
compressed sensing. In effect, we explored the counterpart of the
randomized SVD (RSVD) algorithm~\cite{Tropp11} in the PDE setting, and
in the framework of GFEM, we were able to capture the representative
local $a$-harmonic functions at significantly reduced computational cost.

In this article, we argue there is another quantity that can be
``compressed,'' giving another route to higher numerical
efficiency. In the Schwarz procedure, the whole domain is decomposed
into multiple subdomains with small overlaps. At each iteration, a
local solution is obtained on each patch, using boundary conditions
for that patch supplied by its neighbors. These local solutions yield
boundary conditions for neighboring patches, which are then used in
the next round of Schwarz iteration. In effect, each local solution
procedure is a boundary-to-boundary map. Each Schwarz iteration is
contractive, so the overall procedure converges.  The total cost is
determined by the cost of local solvers, the number of subdomains, and
the number of Schwarz iterations. The latter two can be balanced by
using preconditioning techniques. The cost of the first factor --- the
boundary-to-boundary map --- can be reduced by noting that this map is
compressive. Its spectrum decays exponentially, so a compact
approximation to this map can be obtained and applied rapidly, with
accuracy sufficient to allow convergence of the outer Schwarz
procedure.  Only a few samples (in the form of randomized boundary
conditions) are needed to approximate the boundary-to-boundary
maps. These are performed in the offline stage.  Our procedure can be
regarded as a counterpart of an RSVD algorithm for a PDE solution
operator.  (Our approach in \cite{chen2018random} approximates the
range of the solution space instead.)  This procedure was discussed in
\cite{Chen19c} for the case of the radiative transfer equation, where
there are two scales both needing to be resolved. In this article, we
target an elliptic homogenization problem. Our main contribution lies
in bringing randomized sampling technique and incorporating it with
multiscale domain decomposition methods. Our solver is adaptive to
small scale features, inexpensive to build offline, and has fast
online convergence.

In the remainder of the paper, we introduce fundamental concepts
in~\cref{sec:schwarz} and describe our algorithm
in~\cref{sec:randomSampling}, first reviewing RSVD and interpreting it
in our setting. Computational results are shown
in~\cref{sec:numerics}.

\section{Schwarz method for elliptic equation}\label{sec:schwarz}


We review briefly the elliptic equation with rough media and its
homogenization limit, then present an overview of Schwarz iteration
method under the domain decomposition framework.

\subsection{Elliptic equation with rough media}\label{subsec:elliptic}
We consider the boundary value problem for a scalar second order
elliptic equation with rough media:
\begin{equation}\label{eqn:elliptic}
\begin{cases}
\nabla\cdot (a^\vep(x)\nabla u^\vep(x) )= 0\,, & \mbox{\rm in $\Omega \subset \mathbb{R}^d$}\\
u^\vep(x) = b(x) \,, & \mbox{\rm on $\partial \Omega$,}
\end{cases}
\end{equation}
where the function $a^\vep(x)\in L^\infty(\Omega)$ models the
media. This is the typical governing equation in the modeling of water
flow in porous media and heat diffusion through composite
materials. In these examples, the media tensor $a^\vep(x)$ usually
exhibits multiscale structure; the parameter $\vep$ denotes the
smallest scale that appears explicitly in the media. The media
$a^\vep(x)$ is assumed to be uniformly bounded, namely $\alpha \le
a^\vep(x) \le \beta$. The Dirichlet boundary condition $f(x)$ is a
macroscopic quantity that usually does not contain small scales, that
is, it is independent of $\vep$. For simplicity, we restrict ourselves
to the case in which $d=2$.

For equations demonstrating certain structures, the asymptotic limit
of the equation can be derived. In particular, when the media is
pseudo-periodic, this homogenization procedure is classical. Denoting
\[
a^\vep(x) = a(x,{x}/{\vep}) = a(x,y)\,,\quad\text{with}\quad y={x}/{\vep}
\]
and assuming that $a(x,y)$ is periodic in the fast variable $y$, we
have the following theorem:

\begin{theorem}(\cite[Theorem~2.1]{moskow_vogelius_1997}])
Assuming a priori that $u^\ast \in H^2(\Omega)$, then there exists
constant $C>0$ such that
\begin{equation*}
\| u^\vep - u^\ast \|_{L^2(\Omega)} \leq C\vep \| u^\ast \|_{H^2(\Omega)}
\end{equation*}
where $u^\vep$ is the solution to equation \eqref{eqn:elliptic} and
$u^\ast$ the solution to the following effective equation
\begin{equation}\label{eqn:elliptic_homo}
\begin{cases}
\nabla\cdot (a^\ast(x)\nabla u^\ast(x) )= 0\,, & \mbox{\rm in $\Omega \subset \mathbb{R}^d$}\\
u^\ast(x) = b(x) \,,  &\mbox{\rm on $\partial \Omega$.}
\end{cases}
\end{equation}
\end{theorem}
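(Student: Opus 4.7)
The plan is to prove this by the classical two-scale asymptotic expansion, combined with a boundary corrector and a duality (Aubin--Nitsche) argument that upgrades the natural $H^1$ rate to the $L^2$ rate of order $\vep$ stated in the theorem. Since periodicity of $a(x,y)$ in the fast variable $y$ is assumed, the homogenized coefficient can be constructed explicitly through cell problems, and the error analysis reduces to controlling residuals of an explicit ansatz.

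First, I would introduce the periodic cell problems on the unit torus $Y$: for each $j=1,\dots,d$ and each $x\in\Omega$, let $\chi_j(x,\cdot)$ be the mean-zero periodic solution of
\begin{equation*}
-\nabla_y \cdot \bigl( a(x,y)\nabla_y(\chi_j(x,y)+y_j)\bigr) = 0,
\end{equation*}
and define the effective coefficient
\begin{equation*}
a^\ast_{ij}(x) = \int_Y a(x,y)\bigl(\delta_{ij}+\partial_{y_i}\chi_j(x,y)\bigr)\,\rd y,
\end{equation*}
which is the $a^\ast$ appearing in \eqref{eqn:elliptic_homo}. Formally plugging the ansatz $u^\vep(x) \approx u^\ast(x) + \vep\,\chi_j(x,x/\vep)\partial_{x_j}u^\ast(x) + \vep^2 u_2(x,x/\vep)$ into $\nabla\cdot(a^\vep\nabla\,\cdot\,)$ and matching powers of $\vep$ yields the cell problem at order $\vep^{-1}$ and the homogenized equation \eqref{eqn:elliptic_homo} at order $\vep^0$; uniform ellipticity $\alpha\le a^\vep\le\beta$ ensures that $a^\ast$ is also uniformly elliptic so \eqref{eqn:elliptic_homo} is well-posed.

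Second, I would form the first-order approximation $w^\vep(x)=u^\ast(x)+\vep\,\chi_j(x,x/\vep)\partial_{x_j}u^\ast(x)$ and compute $\nabla\cdot(a^\vep\nabla w^\vep)$ directly. Using the cell equations, the bulk residual reduces to terms containing $\vep\,\partial^2_x u^\ast$ and second-derivatives of $\chi_j$ in $x$, which are of order $O(\vep)$ in $H^{-1}(\Omega)$ under the assumption $u^\ast\in H^2(\Omega)$. However $w^\vep\neq b$ on $\partial\Omega$ because the corrector does not vanish there, so I would introduce a boundary layer corrector $\theta^\vep$, solving an auxiliary problem with $a^\vep$, with boundary data equal to the trace of the corrector and zero source. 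Setting $z^\vep=u^\vep-w^\vep+\theta^\vep$ gives a function with homogeneous Dirichlet data; a standard energy estimate using uniform ellipticity then yields
\begin{equation*}
\| z^\vep \|_{H^1(\Omega)} \le C\vep^{1/2}\|u^\ast\|_{H^2(\Omega)}.
\end{equation*}

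Third, to remove the loss of a square-root factor coming from the boundary layer and obtain the sharp $L^2$ rate, I would apply the Aubin--Nitsche duality trick. For arbitrary $f\in L^2(\Omega)$, let $\phi^\ast$ solve the adjoint homogenized problem $-\nabla\cdot(a^\ast\nabla\phi^\ast)=f$ with zero Dirichlet data, so that $\phi^\ast\in H^2(\Omega)$ with norm controlled by $\|f\|_{L^2}$. Expressing $\int_\Omega(u^\vep-u^\ast)f\,\rd x$ in terms of $\phi^\ast$ and integrating by parts against $w^\vep$ and the corresponding corrector of $\phi^\ast$, one gains an extra factor of $\vep$ relative to the energy estimate, and supremizing over $f$ gives $\|u^\vep-u^\ast\|_{L^2(\Omega)}\le C\vep\|u^\ast\|_{H^2(\Omega)}$ as claimed.

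The main obstacle is the boundary layer. The corrector $\vep\chi_j(x,x/\vep)\partial_{x_j}u^\ast$ does not vanish on $\partial\Omega$, and even with a smooth cutoff the two-scale ansatz fails in a layer of width $O(\vep)$; this is precisely why the energy estimate saturates at $\vep^{1/2}$ rather than $\vep$. The technical core of the Moskow--Vogelius argument is to quantify the $L^2$ and $H^1$ sizes of $\theta^\vep$ (via trace and elliptic regularity estimates on a thin neighborhood of $\partial\Omega$) and to set up the duality so that the boundary-layer contribution is integrated against $\nabla\phi^\ast$, which is itself controllable in $L^2$. Rigorous regularity of $\chi_j(x,y)$ in the slow variable $x$, needed to justify differentiating the corrector, is the other place where care is required, but is standard once $a(x,y)$ is assumed measurable in $x$ and periodic-Lipschitz in $y$ together with the global bounds $\alpha\le a^\vep\le\beta$.
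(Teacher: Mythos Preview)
The paper does not prove this theorem at all: it is quoted verbatim as \cite[Theorem~2.1]{moskow_vogelius_1997} and used only to motivate that the elliptic problem with oscillatory media is ``homogenizable,'' so there is no proof in the paper to compare your proposal against. Your outline is in fact a faithful sketch of the Moskow--Vogelius argument itself (two-scale corrector ansatz, boundary-layer corrector giving the $\vep^{1/2}$ energy rate, then Aubin--Nitsche duality to recover the sharp $L^2$ rate $\vep$), so nothing is wrong with it as a proof of the cited result; it is simply not something the present paper attempts.
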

Although it is not easy to find the smooth effective media $a^\ast$
except in some very special cases, the theorem suggests that the
asymptotic limit of the equation with highly oscillatory media is one
that has smooth media. This observation, when interpreted correctly,
can lead to significant improvements in computation. A naive
discretization method applied to \eqref{eqn:elliptic} would require
discretization with $\Delta x\ll \vep$, while in the limit, assuming
$a^\ast$ is known, the problem \eqref{eqn:elliptic_homo} could be
solved with discretization $\Delta x = o(1)$, leading to a much less
expensive computation.

Our approach exploits this observation.  In~\cref{sec:randomSampling},
we demonstrate that the boundary-to-boundary map used in Schwarz
iteration is indeed compressible, and a random sampling technique can
be used to approximate this map cheaply, leading to computational
savings. Importantly, though, our approach does not require explicit
knowledge of the limiting medium $a^\ast$.

\subsection{Domain decomposition and Schwarz iteration}\label{subsec:dd}
The Schwarz iteration procedure based on domain decomposition divides
the physical domain $\Omega$ into many
overlapping subdomains, obtains local solutions on the subdomains,
exchanges information in the form of boundary conditions for the
subdomains, and repeats the whole procedure until convergence.  We
denote by $\{\Omega_i\,, i=1,\ldots,N\}$ an open cover of $\Omega$
(see~\cref{fig:dd2}), so that
\begin{equation}\label{eqn:dd}
\Omega = \cup_{i=1}^N \Omega_i\,.
\end{equation}
\begin{figure}
	\centering
	\includegraphics[width=0.8\textwidth]{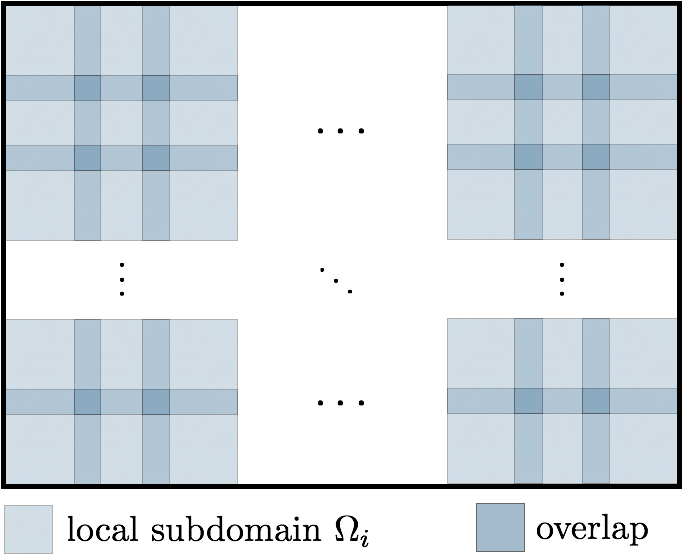}
	\caption{An overlapping domain decomposition of rectangular domain $\Omega$ in 2D}
	\label{fig:dd2}
\end{figure}
Defining the collection of indices of the subdomains that intersect
with $\Omega_i$ as follows:
\begin{equation} \label{eqn:ical}
\Ical_i = \{j\in\mathbb{Z}: \Omega_i\cap\Omega_j\neq\emptyset\}\,,
\end{equation}
the interior of each subdomain $\Omega_i$ can be written as follows:
\begin{equation}\label{eqn:interior}
	\widetilde{\Omega}_i = \Omega_i \cap \left(\cup_{j\in \Ical_i} \Omega_j^c\right)\,,
\end{equation}
where $\Omega_j^c$ denotes the complement of $\Omega_j$.  The global
solution $u^\vep$ to \eqref{eqn:elliptic} can be expressed as a
superposition of many modified local solutions:
\begin{equation}\label{eqn:global}
u^\vep(x) = \sum_{i=1}^N \eta_i(x) u^\vep_i(x)\,,
\end{equation}
where $u^\vep_i(x)$ is the local solution on $\Omega_i$, satisfying
the same elliptic equation locally over the subdomain $\Omega_i$:
\begin{equation}\label{eqn:elliptic_local}
\begin{cases}
\nabla\cdot (a^\vep(x)\nabla u_i^\vep(x) )= 0\,, & \mbox{\rm in $ \Omega_i$}\\
u_i^\vep(x) = f_i(x) \,, & \mbox{\rm on $\partial \Omega_i$,}
\end{cases}
\end{equation}
and $\eta_i(x)$, $i=1,2,\dotsc,N$ are the partition-of-unity functions
satisfying
\[
\sum_{i = 1}^N \eta_i(x)= 1\,, \quad \forall x\in\Omega\,,\quad\text{with}\quad\begin{cases}
0 \leq \eta_i(x) \leq 1\,,  & x\in\Omega_i\\
\eta_i(x) = 0\,,  & x\in\Omega\backslash\Omega_i.
\end{cases}
\]
The collection of local boundary conditions $f=[f_1(x)\,,\dotsc
  f_N(x)]$ are the unknowns in the iteration, found by updating
solutions in local patches iteratively until adjacent patches coincide
in the regions of overlap. Upon finding $f$, one finds the entire
global solution using~\eqref{eqn:global}.

The Schwarz method starts by assigning initial guesses to the local
boundary conditions $f_i(x)$, then solves all subproblems
\eqref{eqn:elliptic_local}, possibly in parallel. The local solutions
$u_i^\vep(x)$ so obtained are then used to update local boundary
conditions for those neighboring subdomains whose boundaries are
contained in $\Omega_i$ (that is, the subdomains denoted by $\Ical_i$
in \eqref{eqn:ical}). The procedure can be summarized as follows:
\begin{equation}\label{eqn:update2}
f_i(x) \xrightarrow{\Scal_i} u_i(x) \xrightarrow{\Pcal_{i}} f_{j}(x)\,, \text{ for all } j\in \Ical_i,
\end{equation}
where $\Scal_i$ the solution operator of equation
\eqref{eqn:elliptic_local} over subdomain $\Omega_i$ and $\Pcal_{i}$
is the restriction operator of the solution to the neighboring
boundaries $\partial\Omega_{j}$ ($j\in \Ical_i$). Defining the
boundary-to-boundary map by $\Acal_i := \Pcal_{i} \circ \Scal_i$, and
defining $\Acal$ to be the aggregation of $\Acal_i$ over
$i=1,2,\dotsc,N$, one can denote
\[
f^\text{new} = \Acal f^\text{old}\,.
\]
The overall procedure is summarized in \cref{alg:schwarz}.  The total
CPU time for this method is approximately the product of the number of
iterations $T$ and the CPU time $\tau$ for each iteration. While $T$
depends on the conditioning of the system, the value of $\tau$ is
determined by the cost of solving the local equation
\eqref{eqn:elliptic_local}.

\begin{algorithm}[h]
	\caption{Schwarz method for equation \eqref{eqn:elliptic}}\label{alg:schwarz}
	\begin{algorithmic}[1]
		\STATE Given total iterations $T$, boundary condition $f$;
		\STATE For $i=1,2,\dotsc,N$, initiate $f_i^0(x) = b(x)$ for $x\in \partial\Omega \cap \partial \Omega_i$ and assign $f_i^0(x)=0$ elsewhere;
                \STATE $t \leftarrow 0$;
		\WHILE{$t<T$}
		\FOR{$i=1,\ldots,N$} 
		\STATE Load $f_i^t$ as boundary condition for equation \eqref{eqn:elliptic_local} and solve for $u_i^t(x)$;
		\STATE Update $f_{j}^{t+1}(x) = u_i^t(x)\,, x\in \Omega \cap \partial \Omega_{j}$ for all $j\in \Ical_i$;
		\ENDFOR
		\ENDWHILE
		\STATE For $i=1,2,\dotsc,N$, load $f_i^T$ as boundary condition for equation \eqref{eqn:elliptic_local} and solve for $u^T_i(x)$;
		\STATE Assemble global solution $u^T(x) = \sum_{i=1}^{N} \eta_i(x)u^T_i(x)$;
		\RETURN $u^T(x)$.
	\end{algorithmic}
\end{algorithm}

\section{Schwarz method based on random sampling}\label{sec:randomSampling}

In this section, we propose a new algorithm that incorporates random
sampling into Schwarz iteration, under the domain decomposition
framework. As discussed above, each iteration requires the solution of
many subproblems, so the cost of obtaining local solutions is critical
to the overall run time. The key operation is the boundary-to-boundary
map $f^{t+1} = \Acal f^t$, which maps the boundary conditions
for the patches at iteration $t$ of the Schwarz procedure to an updated
set of boundary conditions obtained by solving the subproblems on the
patches, then restricting the solution to the patch boundaries. This
map can be prepared offline.  Moreover, by noting that the equation is
``homogenizable'' to an effective equation, we show that the
boundary-to-boundary map has approximately low rank. Techniques
inspired by from randomized linear algebra can be used to find the
low-rank approximation efficiently.

Our main tool is the randomized SVD algorithm. It is was shown
in~\cite{Tropp11} that by multiplying a low-rank matrix and its
transpose by several i.i.d. Gaussian vectors, and performing several
other inexpensive operations, the rank information could be captured
with high accuracy and high probability. We translate this technique
to the PDE setting and use it to reduce the cost of the
boundary-to-boundary map.

Since the random sampling technique plays a crucial role in finding
the approximated map, we quickly review the randomized SVD algorithm
of \cite{Tropp11} in \cref{subsec:randomSampling}. This algorithm
requires a matrix-vector operation involving the adjoint matrix, and
in the PDE setting, we need to find the adjoint associated with the
boundary-to-boundary map, an operation discussed in
in~\cref{subsec:adjointMap}. At the end of this section, we integrate
all components and summarize the algorithm.

\subsection{Random sampling for low rank matrices}
\label{subsec:randomSampling}

Random sampling algorithms have been widely used in numerical linear
algebra and machine learning. They are powerful in extracting
efficiently the main features of objects whose intrinsic dimension is
much smaller than their apparent dimension, such as sparse vectors,
low-rank matrices, or low-dimensional manifolds. We review the
randomized SVD algorithm applied on a large matrix $\Asf\in
\mathbb{R}^{m\times n}$ with $m\geq n$. The SVD of $\Asf$ is 
\begin{equation*}
	\Asf = \Usf \Sigma \Vsf^\top = \sum_{i=1}^n \sigma_i u_i v_i^\top\,,
\end{equation*}
where $\Usf =[u_1,\ldots,u_n] \in \mathbb{R}^{m\times n}$ contains the
left singular vectors, $\Vsf = [v_1,\ldots,v_n]$ contains the right
singular vectors and $\Sigma = \text{diag}(\sigma_1,\ldots,\sigma_n)$
contains the singular values in decreasing order: $\sigma_1\geq
\sigma_2\geq \ldots \sigma_n\geq 0$. If $\Asf$ has approximate rank
$k$, the best rank-$k$ approximation of $\Asf$ is the truncated
rank-$k$ singular value decomposition 
\begin{equation*}
	\Asf_k := \Usf_k \Sigma_k \Vsf^\top_k = \sum_{i=1}^k \sigma_i u_i v_i^\top\,,
\end{equation*}
where $\Usf_k$ and $\Vsf_k$ collect the first $k$ columns of $\Usf$
and $\Vsf$, respectively, and $\Sigma_k$ is the principal $k \times k$
major of $\Sigma$.  The relative error of this approximation is given
by
\begin{equation*}
	\frac{\| \Asf - \Asf_k\|_2}{\|\Asf\|_2} = \frac{\sigma_{k+1}}{\sigma_1}\ll 1\,.
\end{equation*}
Computation of the singular value decomposition of $\Asf$ requires
$\mathcal{O}(mn^2)$ time, which is expensive for large $m$ and $n$. The
randomized SVD algorithm computes an approximation to $\Asf_k$ by
simply applying the matrix $\Asf$ to a relatively few random
i.i.d. Gaussian vectors. The prototype randomized SVD algorithm is
shows as~\cref{alg:rsvdMatrix}.

\begin{algorithm}[h]
	\caption{Randomized SVD algorithm}\label{alg:rsvdMatrix}
	\begin{algorithmic}[1]
		\STATE Given an $m\times n$ matrix $\Asf$, target rank $k$;
		\STATE \textbf{Stage A:}
		\STATE Generate an $n\times 2k$ Gaussian test matrix $\Omega$;
		\STATE Form $\Ysf =  \Asf \Omega$;
		\STATE Perform the QR-decomposition of $\Ysf$: $\Ysf=\Qsf\mathsf{R}$
		\STATE \textbf{Stage B:}
		\STATE Form $\Bsf =  \Asf^\top \Qsf$;
		\STATE Compute the SVD of the $2k \times n$ matrix $\Bsf^\top = \wt{\Usf}\Sigma \Vsf^\top$;
		\STATE Set $\Usf = \Qsf \wt{\Usf}$;
		\RETURN $\Usf, \Sigma, \Vsf$.
	\end{algorithmic}
\end{algorithm}

For completeness, we give the error estimate result.
\begin{theorem}(\cite[Theorem~10.8]{Tropp11})
~\cref{alg:rsvdMatrix} finds accurate SVD of $\Asf$ in
expectation:
\begin{equation*}
	\mathbb{E} \| \Asf - \Usf \Sigma\Vsf^\ast \| \leq \left[1 + 4\sqrt{\frac{2\min\{m,n\}}{k-1}}\right]\sigma_{k+1}\,,
\end{equation*}
\end{theorem}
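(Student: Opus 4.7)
The plan is to exploit the observation that the algorithm's output $\Usf\Sigma\Vsf^\top$ is mathematically identical to the orthogonal projection of $\Asf$ onto the range of $\Qsf$, so the theorem reduces to bounding the error of a randomized range finder. The argument then splits into a deterministic structural inequality and an expectation computation over the Gaussian test matrix, in the style of Halko--Martinsson--Tropp.

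First I would verify the algebraic identity $\Usf\Sigma\Vsf^\top = \Qsf\Qsf^\top\Asf$. In Stage B of \cref{alg:rsvdMatrix}, $\Bsf = \Asf^\top\Qsf$, so $\Qsf^\top\Asf = \Bsf^\top = \widetilde{\Usf}\Sigma\Vsf^\top$, and since $\Usf := \Qsf\widetilde{\Usf}$, we obtain $\Usf\Sigma\Vsf^\top = \Qsf(\Qsf^\top\Asf)$. Because $\Qsf\Qsf^\top$ is the orthogonal projector onto $\text{range}(\Ysf) = \text{range}(\Asf\Omega)$, the task reduces to controlling
\begin{equation*}
\mathbb{E}\,\bigl\|(\Ical - \Qsf\Qsf^\top)\Asf\bigr\|_2.
\end{equation*}

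Next I would invoke the deterministic structural lemma. Write $\Asf = \Usf\Sigma\Vsf^\top$ and partition $\Sigma = \text{diag}(\Sigma_1,\Sigma_2)$ with $\Sigma_1$ holding the top $k$ singular values; set $\Omega_1 = \Vsf_1^\top\Omega \in \mathbb{R}^{k\times 2k}$ and $\Omega_2 = \Vsf_2^\top\Omega \in \mathbb{R}^{(n-k)\times 2k}$ in the rotated basis. Provided $\Omega_1$ has full row rank---true almost surely since $\Omega$ is Gaussian with $2k\ge k$ columns---the deterministic bound (Theorem~9.1 of \cite{Tropp11}) asserts
\begin{equation*}
\bigl\|(\Ical - \Qsf\Qsf^\top)\Asf\bigr\|_2^2 \;\le\; \|\Sigma_2\|_2^2 + \bigl\|\Sigma_2\,\Omega_2\,\Omega_1^\dagger\bigr\|_2^2.
\end{equation*}
Its proof amounts to exhibiting an explicit matrix in $\text{range}(\Ysf)$ built from $\Omega_1^\dagger$ and comparing $\Asf$ to the induced projection.

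Finally I would take expectations. Orthogonal invariance of the Gaussian law makes $\Omega_1$ and $\Omega_2$ independent standard Gaussian matrices. Conditioning on $\Omega_1$, the Chevet-type inequality $\mathbb{E}\|S G T\|_2 \le \|S\|_2\|T\|_F + \|S\|_F\|T\|_2$ for a Gaussian $G$ controls $\mathbb{E}_{\Omega_2}\|\Sigma_2\,\Omega_2\,\Omega_1^\dagger\|_2$ in terms of $\|\Omega_1^\dagger\|_F$ and $\|\Omega_1^\dagger\|_2$; integrating against $\Omega_1$ and applying Jensen's inequality to pull the expectation inside the square root delivers the stated constant. The main obstacle is the sharp moment identity $\mathbb{E}\|\Omega_1^\dagger\|_F^2 = k/(k-1)$ for a $k\times 2k$ Gaussian matrix, which follows from inverse-Wishart moments and is precisely what produces the denominator $k-1$; the $\sqrt{2\min\{m,n\}}$ then emerges by combining $\|\Sigma_2\|_F \le \sqrt{\min\{m,n\}}\,\sigma_{k+1}$ with the $\sqrt{2k}$ arising from the Gaussian operator-norm estimate. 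Collecting all constants yields the claimed factor $1 + 4\sqrt{2\min\{m,n\}/(k-1)}$.
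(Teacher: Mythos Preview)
The paper does not supply its own proof of this statement: it is quoted verbatim as \cite[Theorem~10.8]{Tropp11} and used as a black box, so there is nothing in the paper to compare against. Your sketch is a faithful reconstruction of the Halko--Martinsson--Tropp argument (the identity $\Usf\Sigma\Vsf^\top=\Qsf\Qsf^\top\Asf$, the deterministic error bound via $\Sigma_2\Omega_2\Omega_1^\dagger$, and the Gaussian moment calculation), which is exactly the proof the citation points to, so the approaches coincide by construction.
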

\cref{alg:rsvdMatrix} requires time complexity
\begin{equation}\label{eqn:timeComplexity}
	T_\text{randSVD} = 2k T_\text{mult} + \mathcal{O}(k^2(m+n))\,,
\end{equation}
where $2kT_\text{mult}$ is the time complexity of matrix-vector multiplication with
$\Asf$ and $\Asf^\top$.  
We note that assuming $\Asf$ is approximately of low rank $k\ll \min\{m,n\}$, the complexity
is rather low, and with fast decaying $\sigma_k$, the error is small too.

\subsection{Adjoint map}\label{subsec:adjointMap}
We aim at integrating the randomized SVD algorithm into the framework
of Schwarz method. As described in~\cref{alg:schwarz}, each
time step amounts to an update of the boundary conditions $f$ on the
patches, and has the form
\begin{equation}\label{eqn:update5}
f^{t+1} = \Acal f^t(x)\,,\quad \text{with}\quad\Acal_i = \Pcal_{i} \circ \Scal_i\,.
\end{equation}
where $\Scal_i$ is a solution operator and $\Pcal_i$ is a restriction
operator, both discussed further below. Since the equation is
homogenizable, many degrees of freedom can be neglected, making
$\Acal_i$ approximately low rank.~\cref{alg:rsvdMatrix} is
therefore relevant, but there is an immediate difficulty.  After
applying the full matrix (or operator) to some random vectors in Stage
A, we need in Stage B to apply the {\em transpose} or {\em adjoint} to
given vectors $q_i$.  For the operator define in \eqref{eqn:update5},
we need to know how to operate with both $\Acal_i\xi$ and
$\Acal^\ast_i \zeta$ for any given $\xi$ and $\zeta$.  Computing
$\Acal_i\xi$ is rather straightforward: it amounts to set local
boundary condition being $\xi$ and find the solution's confinement on
the neighboring cells' boundaries. Operating with the adjoint
$\Acal_i^\ast$ is somewhat more complicated.

A second difficulty has to do with the nature of the low rank of
$\Acal_i$. In general, the solution map $\Scal_i$ does {\em not} have
low rank, as we see in Section~\ref{sec:numerics}. We can however identify a {\em confined} solution map $\wt{\Scal}_i$,
which maps the boundary condition on $\partial\Omega_i$ to the {\em
  interior} solution $\widetilde{\Omega}_i$. By composing with the
restriction operator $\Pcal_{i}$ (slightly redefined), we obtain the same $\Acal_i$ of
\eqref{eqn:update5}. It happens that this confined operator
$\wt{\Scal}_i$ has approximately low rank.

Specifically, we define $\wt{\Scal}_i$ as follows: Given the boundary
condition $f$ over $\partial \Omega_i$, we have $ \wt{\Scal}_i f =
u|_{\widetilde{\Omega}_i} $, where $u$ solves the system
\begin{equation*}
\nabla\cdot (a^\vep(x)\nabla u(x) )= 0\,, \text{ in } \Omega_i\quad\text{with}\quad u(x) = f(x) \,, \text{ on } \partial \Omega_i\,.
\end{equation*}
We then rewrite \eqref{eqn:update5} as follows:
\begin{equation}\label{eqn:update6}
f^{t+1} = \Acal f^t(x)\,,\quad \text{with}\quad\Acal_i = \Pcal_{i} \circ \wt{\Scal}_i\,.
\end{equation}

To find the adjoint of $\wt{\Scal}_i$ we show the following theorem.

\begin{theorem}\label{thm:ad}
	Given two open sets $\wt{\Omega}$ and $\Omega$ such that $\overline{\wt{\Omega}} \subset \Omega$, then for arbitrary $f\in H^{1/2}(\partial\Omega)$ and $g\in H^1(\wt{\Omega})$, we have
	\begin{equation}\label{eqn:pf0}
	\langle g,\wt{\Scal} f\rangle_{\wt{\Omega}} =  \langle \wt{\Scal}^\ast g, f \rangle_{\partial \Omega},
	\end{equation}
	where $\wt{\Scal}$ and $\wt{\Scal}^\ast$ are defined as follows:
	\begin{equation}\label{eqn:thm}
	\begin{aligned}
	\wt{\Scal}: &\quad H^{1/2}(\partial \Omega) &\rightarrow &\quad H^1(\wt{\Omega}) \\
	&\quad f &\mapsto &\quad u|_{\wt{\Omega}},
	\end{aligned}
	\end{equation}
	where $u$ is the solution of the following elliptic equation:
	\begin{equation}\label{eqn:thm2}
	\begin{cases}
	\nabla\cdot (a(x)\nabla u(x)) = 0\,, & \mbox{\rm in $\Omega$} \\
	u(x) = f(x)\,, & \mbox{\rm on $\partial \Omega$},
	\end{cases}
	\end{equation}
	and 
	\begin{equation}\label{eqn:thm3}
	\begin{aligned}
	\wt{\Scal}^\ast: &\quad H^{1}(\wt{\Omega}) &\rightarrow &\quad H^{-1/2}(\partial \Omega) \\
	&\quad g &\mapsto &\quad a \frac{\partial v}{\partial n},
	\end{aligned}
	\end{equation}
	where $v$ solves the following sourced elliptic equation:
	\begin{equation}\label{eqn:thm4}
	\begin{cases}
	\nabla\cdot (a(x)\nabla v(x)) = \wt{g}\,, & \mbox{\rm in $\Omega$} \\
	v(x) = 0(x)\,, & \mbox{\rm on $\partial \Omega$,} 
	\end{cases}
	\end{equation}
	and $\wt{g}$ is the zero extension of $g(x)$ over $\Omega$. 
\end{theorem}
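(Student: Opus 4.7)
The plan is to prove the adjoint identity \eqref{eqn:pf0} via Green's identity applied twice, the routine "integrate by parts, move the derivatives onto the other factor" argument for elliptic operators. Specifically, let $u \in H^1(\Omega)$ be the weak solution of \eqref{eqn:thm2} and $v \in H^1_0(\Omega)$ the weak solution of \eqref{eqn:thm4}. I will use the zero extension $\wt{g}$ to turn the $\wt{\Omega}$-inner product into an $\Omega$-inner product, then swap derivatives between $u$ and $v$.

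First I would write
\begin{equation*}
\langle g, \wt{\Scal} f\rangle_{\wt{\Omega}} \;=\; \int_{\wt{\Omega}} g\, u\, \rd x \;=\; \int_{\Omega} \wt{g}\, u\, \rd x \;=\; \int_{\Omega} \nabla\cdot(a\nabla v)\, u\, \rd x,
\end{equation*}
using the PDE \eqref{eqn:thm4} satisfied by $v$. Applying Green's identity to the last integral yields
\begin{equation*}
\int_{\Omega} \nabla\cdot(a\nabla v)\, u\, \rd x \;=\; -\int_{\Omega} a\nabla v\cdot\nabla u\, \rd x \;+\; \Big\langle a\frac{\partial v}{\partial n},\, u\Big\rangle_{\partial\Omega},
\end{equation*}
where the boundary pairing is interpreted as the $H^{-1/2}(\partial\Omega)$–$H^{1/2}(\partial\Omega)$ duality. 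Next, I would integrate by parts in the opposite direction on the bulk term:
\begin{equation*}
-\int_{\Omega} a\nabla v\cdot\nabla u\, \rd x \;=\; \int_{\Omega}\nabla\cdot(a\nabla u)\, v\, \rd x \;-\; \Big\langle a\frac{\partial u}{\partial n},\, v\Big\rangle_{\partial\Omega}.
\end{equation*}
Both terms on the right vanish: the volume term because $\nabla\cdot(a\nabla u)=0$ in $\Omega$ by \eqref{eqn:thm2}, and the boundary term because $v\in H^1_0(\Omega)$ has zero trace on $\partial\Omega$. Using $u|_{\partial\Omega}=f$, what remains is exactly
\begin{equation*}
\langle g, \wt{\Scal} f\rangle_{\wt{\Omega}} \;=\; \Big\langle a\frac{\partial v}{\partial n},\, f\Big\rangle_{\partial\Omega} \;=\; \langle \wt{\Scal}^\ast g, f\rangle_{\partial\Omega},
\end{equation*}
which is \eqref{eqn:pf0}.

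The main obstacle is not the algebraic manipulation but justifying the two integration-by-parts steps at the stated regularity $f\in H^{1/2}(\partial\Omega)$, $g\in H^1(\wt{\Omega})$. Since $u$ need not be in $H^2(\Omega)$ for rough media $a^\vep$, the normal derivative $a\partial_n v$ cannot be defined pointwise. I would handle this by invoking the standard co-normal trace theorem: for $w\in H^1(\Omega)$ with $\nabla\cdot(a\nabla w)\in L^2(\Omega)$, the functional $a\partial_n w$ is well defined in $H^{-1/2}(\partial\Omega)$ through the very Green's identity being used, and Lax–Milgram provides $u\in H^1(\Omega)$ and $v\in H^1_0(\Omega)$ with $\nabla\cdot(a\nabla v)=\wt g\in L^2(\Omega)$. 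With this interpretation of the boundary pairings, the two Green's identities above are exactly the defining relations for the co-normal traces and hold rigorously, completing the proof. The identification of the duality pairing $\langle\cdot,\cdot\rangle_{\partial\Omega}$ as the appropriate extension of the $L^2$-inner product on the boundary is the only subtle point; everything else is a direct computation.
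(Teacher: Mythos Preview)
Your proof is correct and follows essentially the same route as the paper: extend $g$ by zero to convert the $\wt{\Omega}$-pairing into an $\Omega$-integral, use the equation for $v$, integrate by parts, and kill the remaining terms with $\nabla\cdot(a\nabla u)=0$ and $v|_{\partial\Omega}=0$. The only cosmetic difference is that the paper invokes Green's second identity in one stroke whereas you apply Green's first identity twice, and you add a (welcome) remark on interpreting $a\partial_n v$ via the co-normal trace theorem that the paper leaves implicit.
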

\begin{proof}
	Notice that the term on the left in \cref{eqn:pf0} is
	\begin{equation}\label{eqn:pf1}
	\langle g,\wt{\Scal} f\rangle_{\wt{\Omega}} = \int_{\wt{\Omega}} g u = \int_\Omega \wt{g}u = \int_\Omega u \nabla \cdot (a\nabla v).
	\end{equation}
Here $\langle \cdot,\cdot \rangle_{\wt{\Omega}}$ denotes the $L^2$
pairing over $\wt\Omega$ and the second and third equality comes from
the definition of $\wt{g}$. Applying Green's second identity, we obtain
\begin{equation}\label{eqn:pf2}
\int_\Omega \left[ u \nabla \cdot (a \nabla v) - v \nabla\cdot (a \nabla u)\right ] = \int_{\partial \Omega} a\left( u \frac{\partial v}{\partial n} - v \frac{\partial u}{\partial n}\right).
\end{equation}
By comparing \eqref{eqn:pf1} and \eqref{eqn:pf2}, we have
\begin{align*}
\langle g,\wt{\Scal} f \rangle_{\wt{\Omega}} &= \int_\Omega v\nabla\cdot (a\nabla u) + \int_{\partial \Omega} a\left( u \frac{\partial v}{\partial n} - v \frac{\partial u}{\partial n}\right) \\
	& = \int_{\partial \Omega} u \left( a\frac{\partial v}{\partial n} \right) \\
	& = \int_{\partial\Omega} f \left( a\frac{\partial v}{\partial n} \right) \\
	& = \langle \wt{\Scal}^\ast g, f \rangle_{\partial\Omega},
\end{align*}
where we use \eqref{eqn:thm2} and \eqref{eqn:thm4}. Thus
\eqref{eqn:pf0} is proved.
\end{proof}

This theorem shows how to evaluate the adjoint operator
$\wt{\Scal}^{\ast}_i$, thus making it possible to adapt the randomized
SVD approach of \cref{alg:rsvdMatrix} to our setting.  We summarize
the resulting method as~\cref{alg:rsvd}. It requires only $k$
solves of local elliptic PDE \eqref{eqn:elliptic_local} and sourced
elliptic PDE \eqref{eqn:thm4}, together with a QR factorization and
SVD of relatively small matrices.

\begin{algorithm}[h]
	\caption{Randomized SVD for $\wt{\Scal}_i$}\label{alg:rsvd}
	\begin{algorithmic}[1]
		\STATE Given target rank $k$ (an even number) and numerical solver for \eqref{eqn:elliptic_local} and \eqref{eqn:thm4};
		\FOR{$j=1,\ldots,k$}
		\STATE Generate random boundary conditions $\xi_j$ over $\partial \Omega_i$;
		\STATE Load $\xi_j$ as boundary condition in \eqref{eqn:elliptic_local}, and solve to obtain $u_j$;
		\STATE Take restrictions of $u_j$ over $\wt{\Omega}_i$ to obtain $\wt{u}_j$;
		\ENDFOR
		\STATE Find orthonormal basis $Q = [q_1,\ldots,q_k]$ of $\wt{U}:=\{\wt{u}_1,\ldots,\wt{u}_k\}$;
		\FOR{$j=1,\ldots,k$}
		\STATE Construct zero extension of $q_k$ over $\Omega_i$, denoted by $\wt{q}_k$;
		\STATE Load $\wt{q}_k$ as source in \eqref{eqn:thm4}, and solve to obtain $v_j$;
		\STATE Compute boundary flux $b_j := a\frac{\partial v_j}{\partial n}$;
		\ENDFOR
		\STATE Assemble all fluxes $B = [b_1,\ldots,b_k]$;
		\STATE Compute SVD of $B^\ast = \wt{\Usf}_k\Sigma_k \Vsf_k^\ast$;
		\STATE Compute $\Usf_k = Q \wt{\Usf}_k$;
		\RETURN $\Usf_k,\Sigma_k,\Vsf_k$.
	\end{algorithmic}
\end{algorithm}

This procedure can be executed offline to produce a rank-$k$
approximation to $\wt{\Scal}_i$. In online execution of the Schwarz
iteration procedure, \cref{alg:schwarz}, the update procedure
\eqref{eqn:update2} can be modified by replacing $\wt{\Scal}_i$ with
its low rank approximation, defined as follows:
\begin{equation}\label{eqn:update4}
f_i(x) \xrightarrow{\Usf_k\Sigma_k\Vsf_k^\ast} \wt{u}^t_i(x) \xrightarrow{\Pcal_{i}} f_{j}(x)\,, \text{ for all } j\in \Ical_i.
\end{equation}

We summarize our reduced Schwarz procedure as \cref{alg:schwarz_red}.

\begin{algorithm}[ht]
	\caption{Reduced Schwarz method for \eqref{eqn:elliptic}}\label{alg:schwarz_red}
	\begin{algorithmic}[1]
		\STATE Given rank $k$, total iterations $T$, boundary condition $b$;
		\STATE \textbf{Offline}:
		\FOR{$n=1,\ldots,N$}
		\STATE Use \cref{alg:rsvd} to find the rank-$k$ RSVD of $\wt{\Scal}_i$, denoted by $\Usf_k^i\Sigma_k^i\Vsf_k^{i,\ast}$;
		\ENDFOR
		\STATE \textbf{Online}:
		\STATE Initiate $f_i^0(x) = b(x)$ for $x\in \partial\Omega \cap \partial \Omega_i$ and assign $f_i^0(x)=0$ elsewhere;
		\WHILE{$t<T$}
		\FOR{$n=1,\ldots,N$}
		\STATE Evaluate $\wt{u}_i^t = \Usf_k^i\Sigma_k^i\Vsf_k^{i,\ast} f_i^t$;
		\STATE Update $f_{j}^{t+1}(x) = \wt{u}_i^t(x)\,, x\in \Omega \cap \partial \Omega_{j}$, for all $j\in \Ical_i$;
		\ENDFOR
		\ENDWHILE
		\FOR{$n=1,\ldots,T$}
		\STATE Load $f_i^T$ as boundary condition for equation \eqref{eqn:elliptic_local} and solve for $u^T_i(x)$;
		\ENDFOR
		\STATE Assemble global solution $u^T(x) = \sum_{n=1}^{N} \eta(x)u^T_i(x)$;
		\RETURN $u^T(x)$;
	\end{algorithmic}
\end{algorithm}

\section{Numerical Experiments} \label{sec:numerics}

In this section, we report on several numerical tests that demonstrate
effectiveness of our algorithm. We consider 
\eqref{eqn:elliptic} with a highly oscillatory media $a^\vep(x,y)$:
\[
a^\vep(x,y) = \frac{2+1.8\sin(\pi x/\vep)}{2+1.8\cos(\pi y/\vep)}+ \frac{2+\sin(\pi y/\vep)}{2+1.8\sin(\pi x)}\,, \quad (x,y)\in\Omega = [0,10] \times [0,1]\,,
\]
with $\vep = {1}/{16}$. This media is plotted in~\cref{fig:media}.
\begin{figure}[htb]
	\centering
	\includegraphics[width=0.7\textwidth]{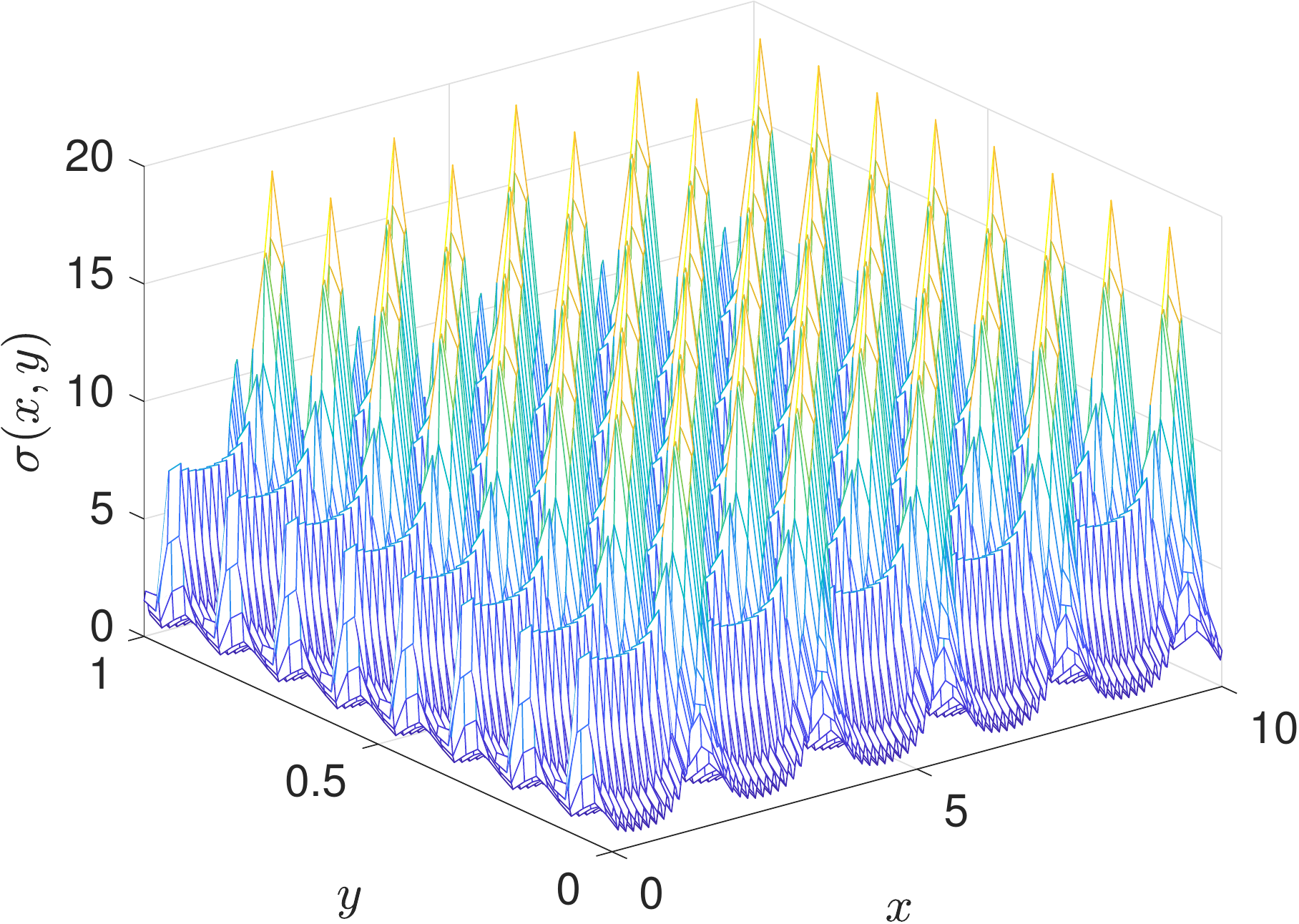}
	\caption{Graph of highly oscillatory media $a^\vep(x,y)$}
	\label{fig:media}
\end{figure}

To resolve the small scale, the fine discretization parameter
is set to $h = 1/40$ in both $x$ and $y$ direction. For ease of
implementation, we decompose the domain into subdomains in just one
dimension, as follows:
\[
\Omega  = \cup_{i=0}^{12} \Omega_n\,, \quad\text{with}\quad \Omega_i := \left[\frac{3i}{4},1+\frac{3i}{4} \right] \times [0,1]\,.
\]
Each subdomain $\Omega_i$ is thus a unit square with one quarter
margin overlapped with its neighbors on both sides. For this case, we
have that $\Ical_i = \{i-1,i+1\}$ for all inner patches;
see~\cref{fig:dd1}.  The boundary condition is
\[
b(x,y) = \sin\left(\frac{\pi}{3}(x-1/3)\right)	\sin\left(3\pi (y - 1/4)\right)\,,\quad\text{with}\quad (x,y)\in \partial \Omega\,.
\]

\begin{figure}[htb]
	\centering
	\includegraphics[width=0.8\textwidth]{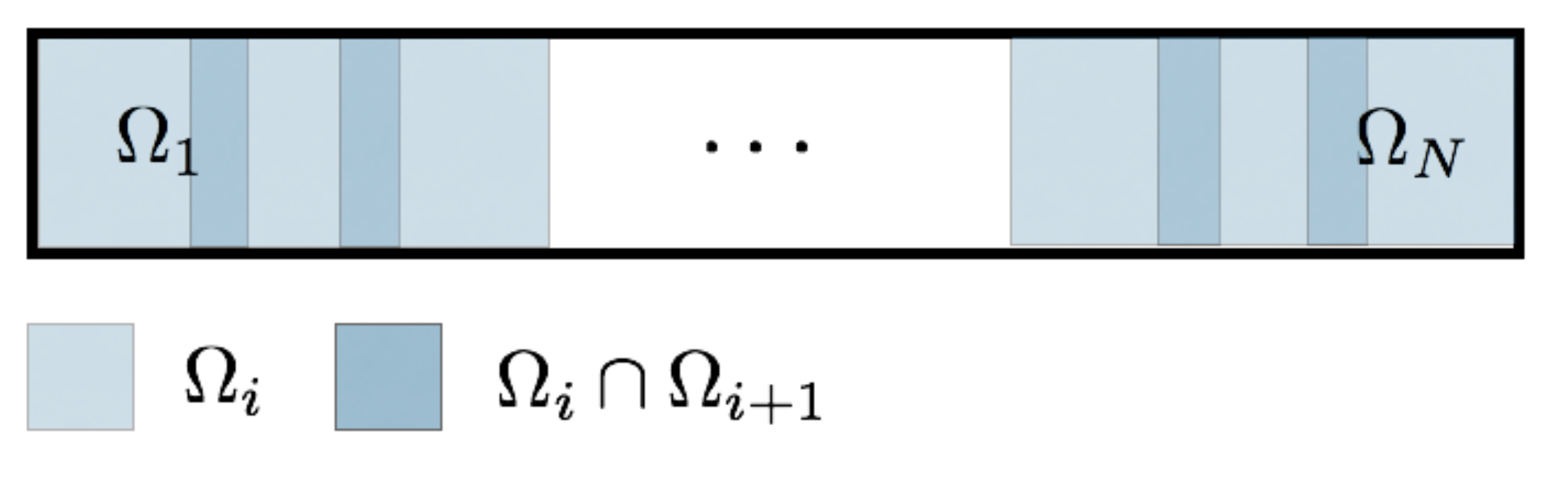}
	\caption{An overlapping domain decomposition of $\Omega$. Each subdomain $\Omega_i$ overlaps with its neighbors with one quarter margin.}
	\label{fig:dd1}
\end{figure}

In the next two subsections, we discuss the results of the offline
operation (low-rank approximation of the boundary-to-boundary map) and
the online iteration results, respectively.

\subsection{Reducibility of update procedure}

As described above, the mapping $\Acal_i$ is composed of a
boundary-to-solution map composed with a trace-taking operation,
defined by either \eqref{eqn:update5} or \eqref{eqn:update6}.  We
claimed above that the the map $\wt{\Scal}_i$ is approximately low
rank, while $\Scal_i$ is not. In \cref{fig:svdDecay}, we plot the
singular values of these two operators for subdomain $\Omega_3$, and
observe these claims to hold for this subdomain. We also plot the
singular values of $\Acal_3$, for which the low-rank structure
is even more evident. Similar results hold for the other inner subdomains $\Omega_i$,
$i=1,2,\dotsc,11$.

\begin{figure}
	\centering
	\includegraphics[width=0.7\textwidth]{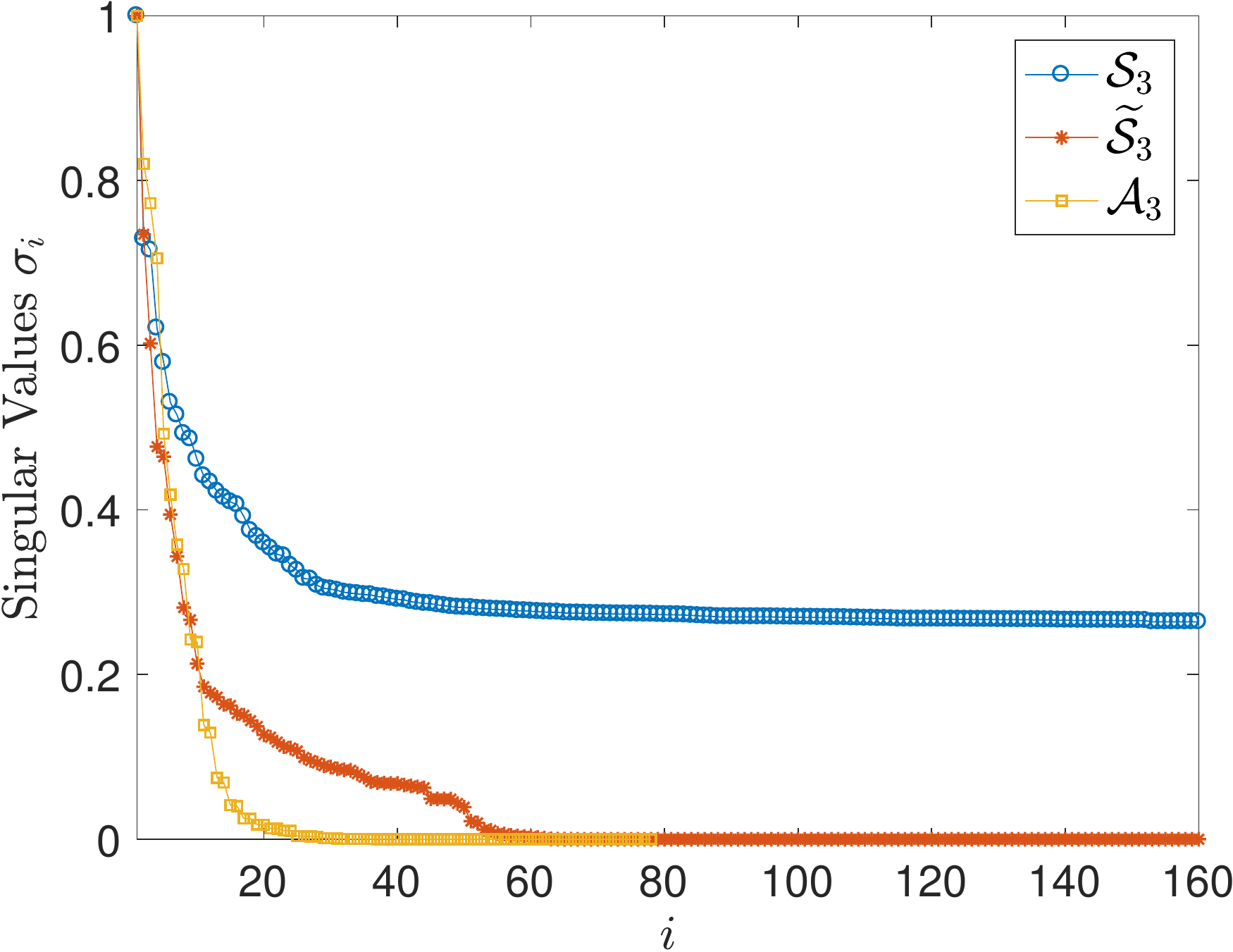}
	\caption{Singular values of operator $\Scal_3,\wt{\Scal}_3$ and $\Acal_3$. }
	\label{fig:svdDecay}
\end{figure}

\subsection{Performance of reduced Schwarz method}
To demonstrate the accuracy and efficiency of our method, we run the
reduced Schwarz method, \cref{alg:schwarz_red}, for several values of
the rank parameter ($k = 40,70,100,130$) in each subdomain. The reference solution $u_{\text{ref}}$ is computed
using the vanilla Schwarz method with $T=100$, at which iteration the
relative difference between successive iterations reaches machine
precision. In \cref{fig:soln}, we
compare the reference solution to the solution produced by
\cref{alg:schwarz_red} with $k=40$ and $T=50$. The difference is
barely visible.

\begin{figure}[htb]
	\centering
	\includegraphics[width=0.45\textwidth]{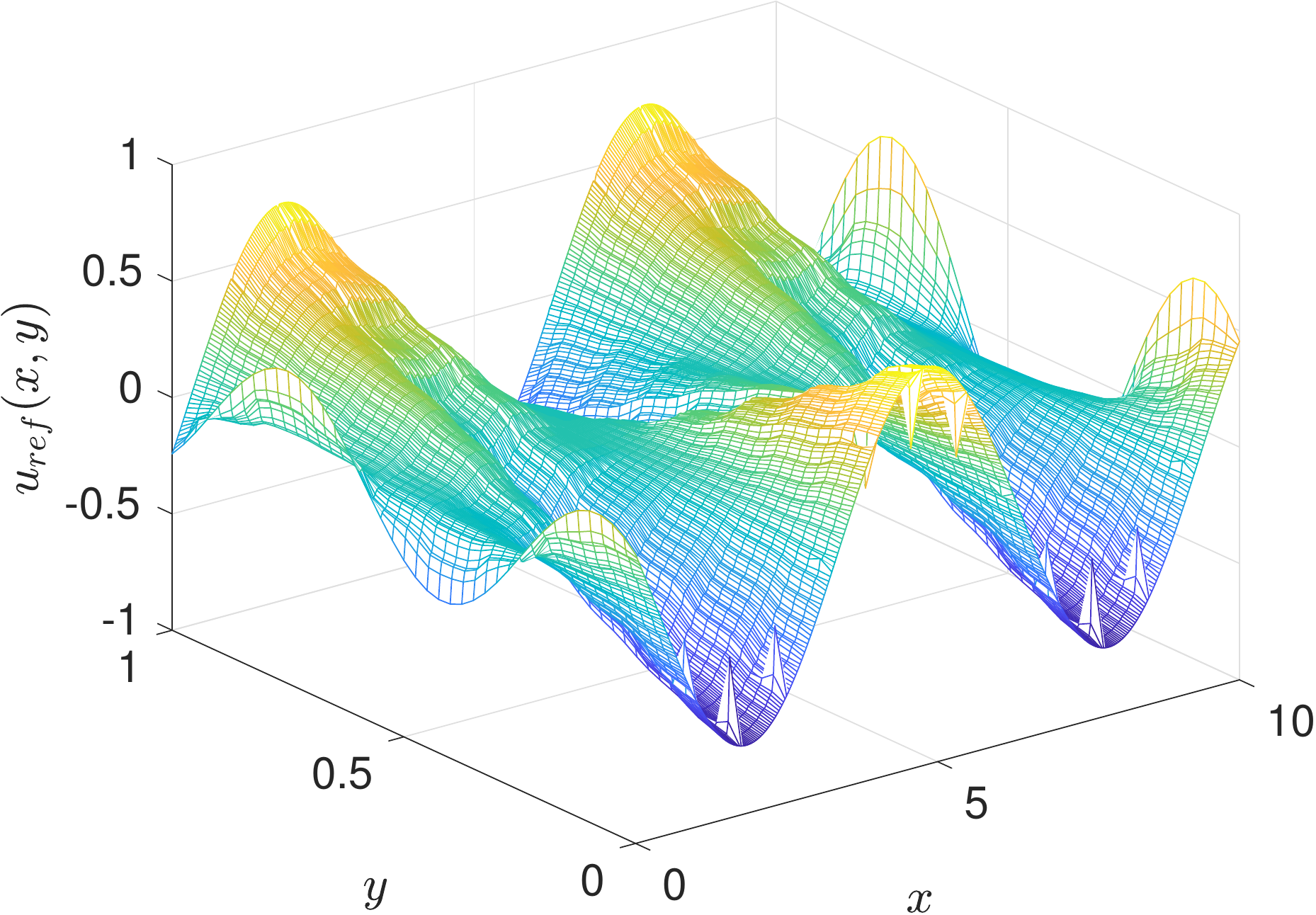}
	\includegraphics[width=0.45\textwidth]{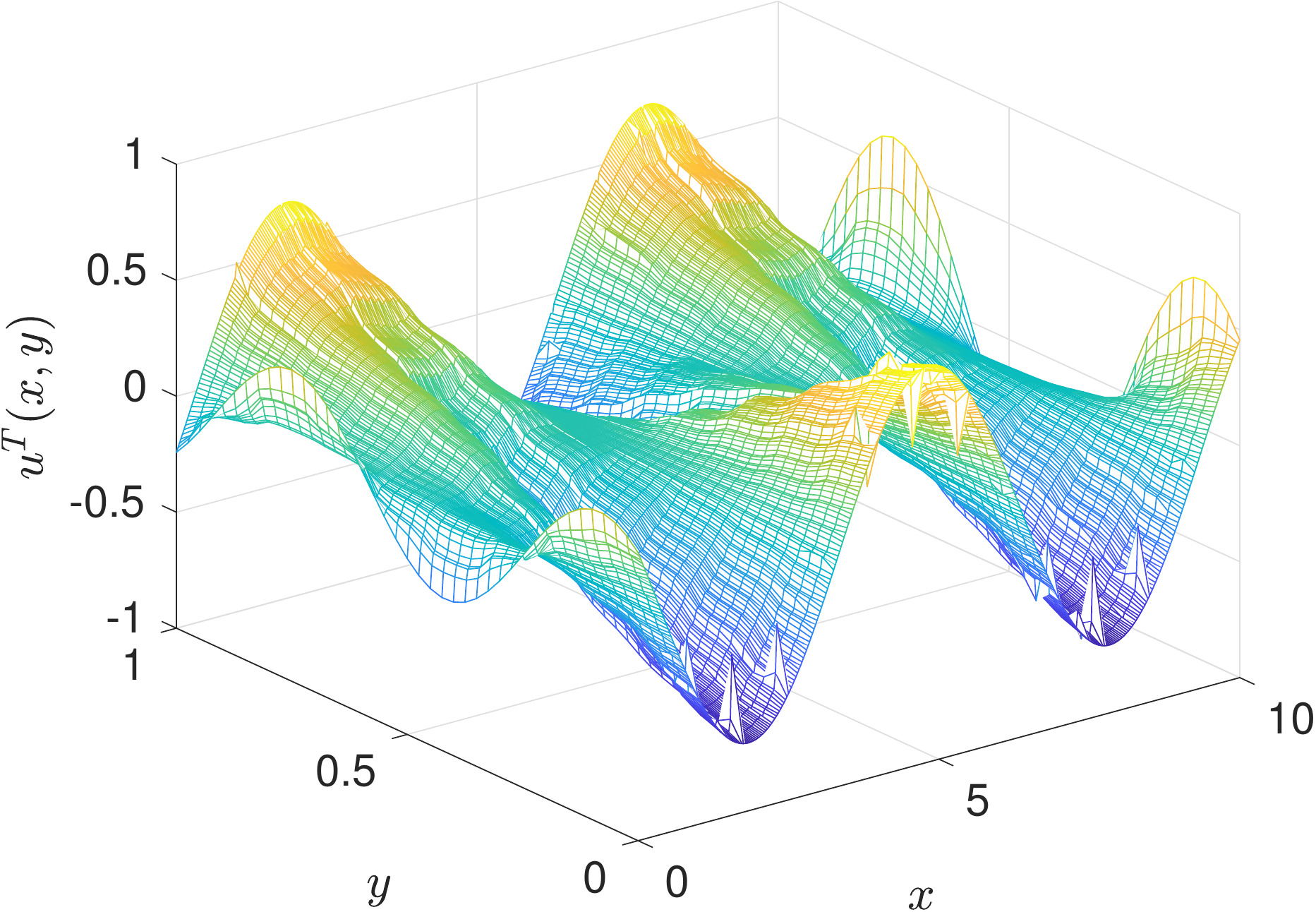}
	\caption{\textbf{Left:} Reference solution generated by running the Schwarz method with $100$ iterations; \textbf{Right:} Approximate solution generated by running the reduced Schwarz method with rank $k=40$ for $50$ iterations.}
	\label{fig:soln}
\end{figure}

We  document the relative errors defined by
\[
\text{Relative Error} = \frac{\| u - u_\text{ref}\|_2}{\|u_\text{ref}\|_2}
\]
at the iterates of both the vanilla Schwarz (\cref{alg:schwarz}) and the
reduced Schwarz (\cref{alg:schwarz_red}) methods, the latter for
various values of $k$. From this semilog plot, it is clear that error
decays exponentially with iteration number in all cases, and at the
same rate. Moreover, increasing $k$
allows the error to saturate at an increased level of accuracy. Still,
with rank $k=70$ (just one third of the full basis), we are already
able to capture an accuracy of $10^{-5}$.

\begin{figure}[htb]
	\centering
	\includegraphics[width=0.7\textwidth]{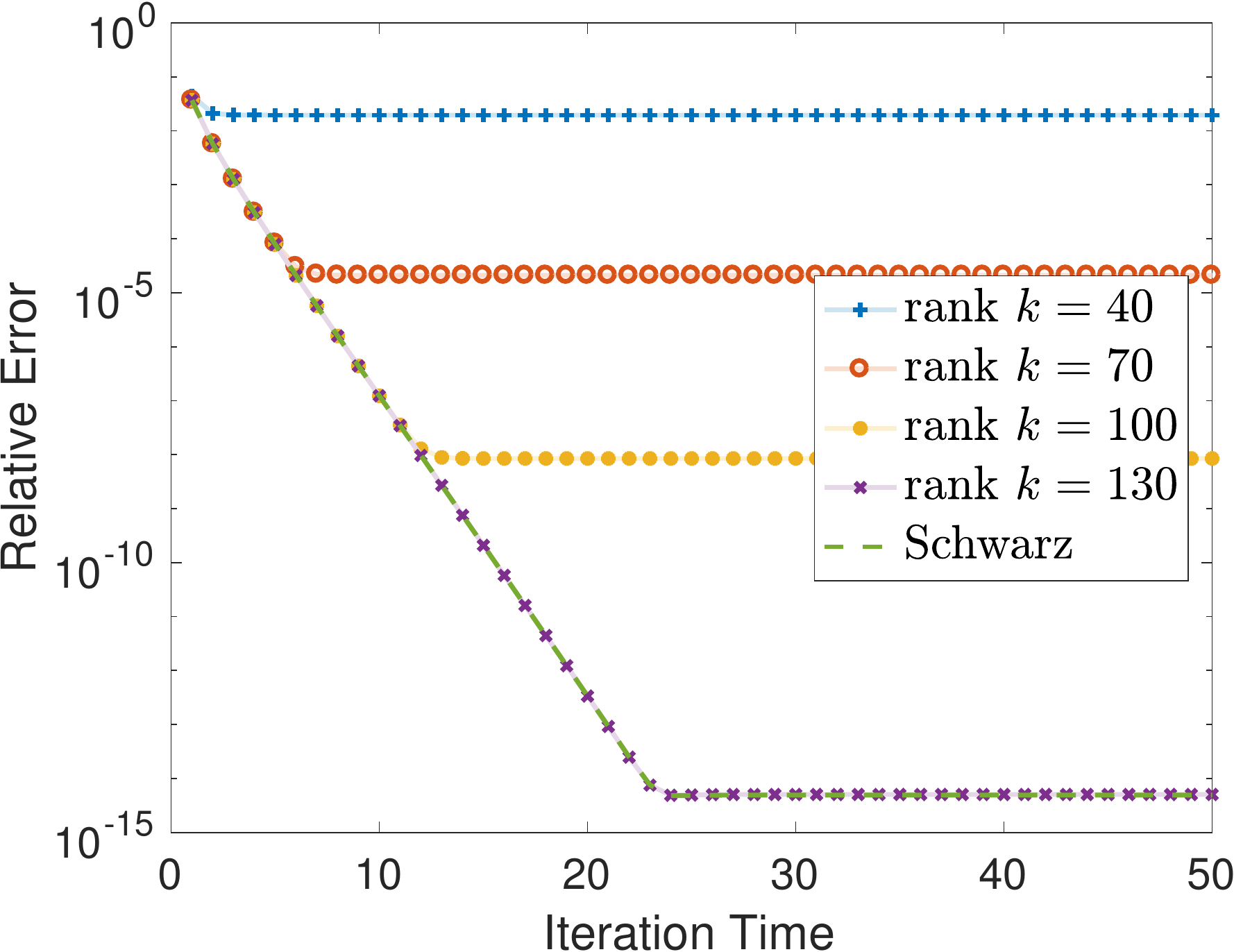}
	\caption{Relative Error for Schwarz method and reduced Schwarz methods, for various ranks $k$. The relative error decreases exponentially in time, and saturates at different levels. A higher rank $k$ in the reduced Schwarz method causes the error to saturate at a higher level of accuracy.}
	\label{fig:relErr}
\end{figure}

To demonstrate efficiency, we report in \cref{tbl:time} offline and
online calculation time for reduced Schwarz method, and compare it
with the vanilla Schwarz method. Although the reduced method is slower
overall for solving a single instance, it is extremely fast in the
online stage, implying that it is highly competitive if one needs to
solve \eqref{eqn:elliptic} for multiple different boundary
conditions. (In this case, each new set of boundary conditions
requires only the online stage to be performed in the reduced Schwarz
procedure, whereas in vanilla Schwarz, the entire method needs to be
executed again.)

\begin{table}[htb]
	\centering
	\begin{tabular}{l | c | c | c | c| c}
		\hline 
		\multirow{2}{*}{Run Time (s)}&\multicolumn{4}{ c |}{Reduced Schwarz }& \multirow{2}{*}{Schwarz}\\
		& $k = 40$&$k = 70$&$k = 100$&$k = 130$&  \\
		\hline
		Offline Stage	&49.7   &87.3  &129.4  &167.4 & 0 \\
		Online Stage  &.049  &  .061  &  .070   & .068 &31.4 \\
		Total Time	&49.8  & 87.3 & 129.4  &167.4 & 31.4\\
		\hline
	\end{tabular}
	\caption{Run times for the vanilla Schwarz method and the reduced Schwarz method for several values of $k$.}
	\label{tbl:time}
\end{table}

\section*{Acknowledgments}
We would like to acknowledge anonymous referees for suggestions, and Thomas Y. Hou and Jianfeng Lu for insightful discussions.

\bibliographystyle{siamplain}
\bibliography{references}

\begin{thebibliography}{10}

\bibitem{Aarnes2002MultiscaleDD}
{\sc J.~Aarnes and T.~Y. Hou}, {\em Multiscale domain decomposition methods for
  elliptic problems with high aspect ratios}, Acta Mathematicae Applicatae
  Sinica, 18 (2002), pp.~63--76.

\bibitem{abdulle_weinan_engquist_vanden-eijnden_2012}
{\sc A.~Abdulle, E.~Weinan, B.~Engquist, and E.~Vanden-Eijnden}, {\em The
  heterogeneous multiscale method}, Acta Numerica, 21 (2012), pp.~1--87,
  \url{https://doi.org/10.1017/S0962492912000025}.

\bibitem{Allaire92}
{\sc G.~Allaire}, {\em Homogenization and two-scale convergence}, SIAM Journal
  on Mathematical Analysis, 23 (1992), pp.~1482--1518,
  \url{https://doi.org/10.1137/0523084}.

\bibitem{BL11}
{\sc I.~Babuska and R.~Lipton}, {\em Optimal local approximation spaces for
  generalized finite element methods with application to multiscale problems},
  Multiscale Modeling \& Simulation, 9 (2011), pp.~373--406,
  \url{https://doi.org/10.1137/100791051}.

\bibitem{Osborn83}
{\sc I.~Babuška and J.~Osborn}, {\em Generalized finite element methods: Their
  performance and their relation to mixed methods}, SIAM Journal on Numerical
  Analysis, 20 (1983), pp.~510--536, \url{https://doi.org/10.1137/0720034}.

\bibitem{chen2018random}
{\sc K.~Chen, Q.~Li, J.~Lu, and S.~J. Wright}, {\em Random sampling and
  efficient algorithms for multiscale {PDE}s}, arXiv preprint arXiv:1807.08848,
   (2018).

\bibitem{Chen19c}
{\sc K.~Chen, Q.~Li, J.~Lu, and S.~J. Wright}, {\em A low-rank {Schwarz} method
  for radiative transport equation with heterogeneous scattering coefficient},
  Technical Report arXiv:1906.02176, University of Wisconsin-Madison, June
  2019.

\bibitem{dolean2015introduction}
{\sc V.~Dolean, P.~Jolivet, and F.~Nataf}, {\em An introduction to domain
  decomposition methods: algorithms, theory, and parallel implementation},
  vol.~144, SIAM, 2015.

\bibitem{e2003}
{\sc W.~E and B.~Engquist}, {\em The heterognous multiscale methods}, Commun.
  Math. Sci., 1 (2003), pp.~87--132,
  \url{https://projecteuclid.org:443/euclid.cms/1118150402}.

\bibitem{EFENDIEV2011}
{\sc Y.~Efendiev, J.~Galvis, and X.-H. Wu}, {\em Multiscale finite element
  methods for high-contrast problems using local spectral basis functions},
  Journal of Computational Physics, 230 (2011), pp.~937 -- 955,
  \url{https://doi.org/https://doi.org/10.1016/j.jcp.2010.09.026}.

\bibitem{efendiev2009multiscale}
{\sc Y.~Efendiev and T.~Y. Hou}, {\em Multiscale finite element methods: theory
  and applications}, vol.~4, Springer Science \& Business Media, 2009.

\bibitem{Efendiev2010}
{\sc J.~Galvis and Y.~Efendiev}, {\em Domain decomposition preconditioners for
  multiscale flows in high-contrast media}, Multiscale Modeling \& Simulation,
  8 (2010), pp.~1461--1483, \url{https://doi.org/10.1137/090751190}.

\bibitem{GALVIS2014456}
{\sc J.~Galvis and J.~Wei}, {\em Ensemble level multiscale finite element and
  preconditioner for channelized systems and applications}, Journal of
  Computational and Applied Mathematics, 255 (2014), pp.~456 -- 467,
  \url{https://doi.org/https://doi.org/10.1016/j.cam.2013.06.007},
  \url{http://www.sciencedirect.com/science/article/pii/S0377042713003038}.

\bibitem{graham2007domain}
{\sc I.~G. Graham, P.~Lechner, and R.~Scheichl}, {\em Domain decomposition for
  multiscale {PDE}s}, Numerische Mathematik, 106 (2007), pp.~589--626.

\bibitem{Tropp11}
{\sc N.~Halko, P.~Martinsson, and J.~Tropp}, {\em Finding structure with
  randomness: Probabilistic algorithms for constructing approximate matrix
  decompositions}, SIAM Review, 53 (2011), pp.~217--288,
  \url{https://doi.org/10.1137/090771806}.

\bibitem{HOU1997169}
{\sc T.~Y. Hou and X.-H. Wu}, {\em A multiscale finite element method for
  elliptic problems in composite materials and porous media}, Journal of
  Computational Physics, 134 (1997), pp.~169 -- 189,
  \url{https://doi.org/https://doi.org/10.1006/jcph.1997.5682}.

\bibitem{maalqvist2014localization}
{\sc A.~M{\aa}lqvist and D.~Peterseim}, {\em Localization of elliptic
  multiscale problems}, Mathematics of Computation, 83 (2014), pp.~2583--2603.

\bibitem{mathew2008domain}
{\sc T.~Mathew}, {\em Domain decomposition methods for the numerical solution
  of partial differential equations}, vol.~61, Springer Science \& Business
  Media, 2008.

\bibitem{moskow_vogelius_1997}
{\sc S.~Moskow and M.~Vogelius}, {\em First-order corrections to the
  homogenised eigenvalues of a periodic composite medium. a convergence proof},
  Proceedings of the Royal Society of Edinburgh: Section A Mathematics, 127
  (1997), pp.~1263--1299, \url{https://doi.org/10.1017/S0308210500027050}.

\bibitem{Owhadi15}
{\sc H.~Owhadi}, {\em Bayesian numerical homogenization}, Multiscale Modeling
  \& Simulation, 13 (2015), pp.~812--828,
  \url{https://doi.org/10.1137/140974596}.

\bibitem{Owhadi14}
{\sc H.~Owhadi, L.~Zhang, and L.~Berlyand}, {\em Polyharmonic homogenization,
  rough polyharmonic splines and sparse super-localization}, ESAIM:
  Mathematical Modelling and Numerical Analysis, 48 (2014), pp.~517--552,
  \url{https://doi.org/10.1051/m2an/2013118}.

\bibitem{smith2004domain}
{\sc B.~Smith, P.~Bjorstad, and W.~Gropp}, {\em Domain decomposition: Parallel
  multilevel methods for elliptic partial differential equations}, Cambridge
  University Press, 2004.

\bibitem{toselli2006domain}
{\sc A.~Toselli and O.~Widlund}, {\em Domain decomposition methods-algorithms
  and theory}, vol.~34, Springer Science \& Business Media, 2006.

\end{thebibliography}
\end{document}